\DeclareMathOperator{\1}{\mathbbm{1}}
\newcommand{\eee}{{\rm e}}
\newcommand{\mmp}{\mathbb{P}}
\newcommand{\dod}{\overset{{\rm d}}{\to}}
\newcommand{\me}{\mathbb{E}}
\newcommand{\mr}{\mathbb{R}}
\newcommand{\mn}{\mathbb{N}}
\newtheorem{thm}{Theorem}[section]
\newtheorem{lemma}[thm]{Lemma}
\newtheorem{assertion}[thm]{Proposition}
\theoremstyle{definition}
\theoremstyle{remark}
\newtheorem{rem}[thm]{Remark}
\begin{document}
\title{Some central limit theorems for critical beta-splitting trees\footnote{This article is partly based on the unpublished manuscript `Another proof of CLT for critical beta-splitting tree' by A. Iksanov written in May, 2024. That manuscript contained a new proof of the joint distributional convergence of $(L_n, D_n)$. New results on the distributional convergence of $((L_{n,1},L_{n,2},\ldots), L_n, D_n)$ were obtained by A. Nikitin and R. Yakymiv.}}
\date{\today}

\author{Alexander Iksanov\footnote{Faculty of Computer Science and Cybernetics, Taras Shevchenko National University of Kyiv, Ukraine, e-mail:
iksan@univ.kiev.ua} \ \ Anatolii Nikitin\footnote{Faculty of Natural Sciences, Jan Kochanowski University of Kielce, Poland and Faculty of Economics, National University of Ostroh Academy, Ukraine; e-mail address: anatolii.nikitin@ujk.edu.pl} \ \  Roman Yakymiv\footnote{Faculty of Computer Science and Cybernetics, Taras Shevchenko National University of Kyiv, Ukraine, e-mail:
yakymiv@knu.ua}}

\maketitle

\begin{abstract}
We further explore a connection initially unveiled in Iksanov (2025) between critical beta-splitting trees and infinite `balls-in-boxes' schemes. Using the connection, we derive a new joint central limit theorem for components of the height of a leaf chosen uniformly at random in the discrete version of a critical beta-splitting tree. Also, we obtain a joint central limit theorem for the heights in the discrete and continuous versions of a critical beta-splitting tree.
\end{abstract}

\noindent Keywords: central limit theorem, infinite occupancy,  random tree, subordinator

\noindent  2020 Mathematics Subject Classification: Primary 60C05, 60F05; Secondary 05C05, 60K05

\section{Setting and result}

For $\beta\in (-2,+\infty)$ and integer $m\geq 2$, define the probability distribution $q^{(\beta)}(m):=(q^{(\beta)}(m,i),\,1\leq i\leq m-1)$ by $$q^{(\beta)}(m,i):=\frac{1}{a^{(\beta)}(m)}\frac{\Gamma(\beta+i+1)\Gamma(\beta+m-i+1)}{\Gamma(i+1)\Gamma(m-i+1)},\quad i=1,2,\ldots, m-1,$$ where $\Gamma$ is the Euler gamma function and $a^{(\beta)}(m)$ is the normalizing constant. Following \cite{Aldous:1996}, we construct a random rooted tree called {\it beta-splitting tree with $n$ leaves} by recursively splitting the set $\{1,2,\ldots, n\}$ of `elements'. There is a left edge at the root leading to a left subtree with $G_n$ elements $\{1,2,\ldots, G_n\}$ and a right edge leading to a right subtree with $n-G_n$ elements $\{G_n+1, G_n+2,\ldots, n\}$. Here, $G_n$ is a random variable with distribution $q^{(\beta)}(n)$. Recursively, each set of $m$ elements with $m\geq 2$ is split into two subsets, with a left subset having a random size with distribution $q^{(\beta)}(m)$. The procedure stops once all sets of elements have become leaves (singletons). Here is a possible realization of the procedure with $n=5$:
\begin{equation}\label{eq:example}
[12345] \to [12][345]\to [1][2][34][5]\to [1][2][3][4][5].
\end{equation}

The beta-splitting tree with $\beta=-1$ is called {\it critical}. For the critical trees, $$q^{(-1)}(m,i)=\frac{m}{2h_{m-1}}\frac{1}{i(m-i)},\quad i=1,2,\ldots, m-1,$$ where $h_m:=\sum_{j=1}^m 1/j$. The term `critical' stems from the fact the leaf heights are of order $n^{-\beta-1}$ for $\beta\neq -1$, whereas these are of order $\log n$ for $\beta=-1$. A detailed investigation of the critical beta-splitting trees was undertaken very recently in a series of papers \cite{Aldous+Janson:2025, Aldous+Janson:2025b, Aldous+Pittel:2025}, 
see also \cite{Aldous+Janson+Li:2025, Brandenberger+Chin+Mossel:2025, Iksanov:2025, Kolesnik:2025}. The state of the art can be traced via the dynamically changing arXiv article \cite{Aldous+Janson:2025+}.

There are two versions of this construction: discrete and continuous. In the discrete construction, the edges are assumed to have lengths $1$, whereas in the continuous construction, the edge emanating from a subset of size $m\geq 2$ has a random length with the exponential distribution of mean $1/h_{m-1}$. Let $L_n$ and $D_n$ denote the heights of a leaf chosen uniformly at random in the discrete and continuous versions of a critical beta-splitting tree with $n$ leaves, respectively. The terms `edge-height' for $L_n$ and `time-height' for $D_n$ are also used. As the splitting procedure proceeds, the sizes of subtrees containing a uniformly chosen leaf decrease from $n$ to $1$. For $j\in\mn$, $j\leq n-1$, denote by $L_{n,j}$ the number of the size $j$ decrements in a sequence of the sizes of those subtrees. Referring back to \eqref{eq:example} and assuming that a uniformly chosen leaf is $3$, the sizes of subtrees are $5$, $3$, $2$ and $1$, $L_5=3$, $L_{5,1}=2$, $L_{5,2}=1$ and $L_{5,3}=L_{5,4}=0$. Plainly, $L_n=\sum_{j=1}^{n-1}L_{n,j}$ and $\sum_{j=1}^{n-1}jL_{n,j}=n-1$.

In Theorem 1.7 of \cite{Aldous+Pittel:2025}, the following central limit theorems were proved. Let ${\rm Normal}\,(0,1)$ denote a random variable with the normal distribution of zero mean and unit variance. As usual, $\dod$ denotes convergence in distribution. 
\begin{assertion}\label{thm:main}
As $n\to\infty$, $$\frac{L_n-(2\zeta(2))^{-1}(\log n)^2}{((2\zeta(3)/3(\zeta(2))^3)(\log n)^3 )^{1/2}}~\dod~{\rm Normal}\,(0,1),\quad n\to\infty,$$ where $\zeta(s):=\sum_{n\geq 1}n^{-s}$ for $s>1$, and  $$\frac{D_n-(\zeta(2))^{-1}\log n}{((2\zeta(3)/(\zeta(2))^3)\log n)^{1/2}}~\dod~{\rm Normal}\,(0,1),\quad n\to\infty.$$
\end{assertion}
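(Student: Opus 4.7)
The plan is to realize $(L_n, D_n)$ as functionals of a single approximate subordinator and invoke classical first-passage CLTs, in the spirit of the connection introduced in Iksanov (2025). Let $(M_k)_{k\geq 0}$ be the sequence of subtree sizes along the path from the root to the uniformly chosen leaf, with $M_0 = n$, absorbed at $1$. Combining the splitting law $q^{(-1)}$ with the size-biased leaf choice yields the transition
\[
\mathbb P(M_{k+1} = i \mid M_k = m) = \frac{1}{h_{m-1}(m-i)}, \qquad i = 1, \ldots, m-1,
\]
and in the continuous version the sojourn at $m$ is $\mathrm{Exp}(h_{m-1})$ independent of the embedded jumps; equivalently the continuous-time chain $(\widetilde M_t)$ has jump rate $1/(m-i)$ from $m$ to $i$. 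A change of variable $y = \log(m/i)$ turns this rate into an $m$-independent density $1/(\eee^y - 1)$ on $(0, \log m]$, so that $V_t := \log n - \log \widetilde M_t$ agrees up to truncation with a subordinator carrying L\'evy measure $\nu(dy) = dy/(\eee^y - 1)$ on $(0, \infty)$, and $D_n = \inf\{t : V_t \geq \log n\}$.

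Under this identification, the identities
\[
\int_0^\infty \frac{y\, dy}{\eee^y - 1} = \zeta(2), \qquad \int_0^\infty \frac{y^2\, dy}{\eee^y - 1} = 2\zeta(3)
\]
supply the mean and second moment of $\nu$, and the classical first-passage CLT for subordinators with finite second moment gives the stated CLT for $D_n$. For $L_n$, the jump intensity of $(\widetilde M_t)$ at time $t$ is $h_{\widetilde M_t - 1} = \log \widetilde M_t + O(1) = (\log n - V_t) + O(1)$, hence $L_n = \int_0^{D_n}(\log n - V_s)\, ds + O_P(\log n)$. Applying a Brownian functional approximation $V_s - \zeta(2) s \approx \sqrt{2\zeta(3)}\,B_s$ together with the defining relation $\zeta(2) D_n + \sqrt{2\zeta(3)}\,B_{D_n} \approx \log n$, elementary algebra yields
\[
L_n - \frac{(\log n)^2}{2\zeta(2)} \approx -\sqrt{2\zeta(3)} \int_0^{D_n} B_s\, ds,
\]
a centred Gaussian of variance $2\zeta(3)\,D_n^3/3 \sim (2\zeta(3)/(3\zeta(2)^3))(\log n)^3$. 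This gives the stated CLT for $L_n$; as a bonus, the representation produces the joint Gaussian limit of $(L_n, D_n)$ with asymptotic correlation $\sqrt 3/2$.

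The main obstacle is that $(V_t)$ is not literally a subordinator: from state $m$ the jump measure is truncated to $y \leq \log m$ and the embedded chain has finite jump rate $h_{m-1}$ rather than the infinite activity of $\nu$. For large $m$ this truncation is harmless, but near the end of the run, where $\widetilde M_t$ becomes of constant order, it matters. The standard remedy is to couple $(V_t)$ with a genuine subordinator up to the stopping time $\tau := \inf\{t : \widetilde M_t \leq (\log n)^\alpha\}$ for a small $\alpha > 0$, to prove the CLTs along the coupling, and then to verify that the post-$\tau$ contribution comprises $O(\log\log n)$ jumps and $O(1)$ time, both negligible against the CLT scales $\sqrt{(\log n)^3}$ and $\sqrt{\log n}$. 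A complementary Berry-Esseen step is required to pass from the subordinator's first-passage distribution to its Gaussian limit, which is standard given the finite second moment of $\nu$.
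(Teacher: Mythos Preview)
Your proposal is correct in spirit and identifies the right subordinator with L\'evy measure $\nu({\rm d}y)={\rm d}y/(\eee^y-1)=\eee^{-y}(1-\eee^{-y})^{-1}{\rm d}y$, the same $\nu$ the paper uses, together with the correct moments ${\tt m}_1=\zeta(2)$, ${\tt m}_2=2\zeta(3)$. You also recover the joint Gaussian limit $\big(\int_0^1 W(u)\,{\rm d}u,\,W(1)\big)$ and the correlation $\sqrt{3}/2$, which is exactly Theorem~\ref{thm:main2}. The route, however, is genuinely different from the paper's. You work with an \emph{approximate} subordinator $V_t=\log n-\log\widetilde M_t$, accept the truncation of the L\'evy measure at $\log m$, and then repair the resulting discrepancies by a coupling up to a cutoff time $\tau$ plus a terminal-phase estimate; $L_n$ is handled via the compensator identity $L_n=\int_0^{D_n}h_{\widetilde M_{s-}-1}\,{\rm d}s+O_P(\log n)$ and a Brownian substitution. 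The paper instead exploits an \emph{exact} distributional identity (Proposition~\ref{prop:eqdistr}): $(L_{n+1},D_{n+1})\overset{{\rm d}}{=}(K_n,S^\leftarrow(E_{n,n}))$, where $S$ is the genuine subordinator with L\'evy measure $\nu$, the $E_i$ are i.i.d.\ exponentials, and $K_n$ is the number of gaps of the range of $S$ occupied by $E_1,\ldots,E_n$. This eliminates all coupling and truncation work at a stroke; the CLTs then follow from a functional CLT for $S^\leftarrow$ (via Vervaat's inversion lemma) combined with the $K_n$ asymptotics of Gnedin--Iksanov (Theorem~\ref{thm:31a} here). What your approach buys is self-containment: you do not need the regenerative-composition machinery, only standard martingale and first-passage estimates. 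What the paper's approach buys is brevity and cleanliness: once the exact identity is in hand, no error terms need to be controlled, and the joint limit drops out from a single functional convergence. Your sketch would become a full proof once the coupling error on $[\tau,D_n]$ and the martingale remainder $L_n-\int_0^{D_n}h_{\widetilde M_{s-}-1}\,{\rm d}s$ are bounded rigorously; both are $O_P(\log n)$ as you indicate, which is indeed $o((\log n)^{3/2})$.
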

The proof in \cite{Aldous+Pittel:2025} was based on an asymptotic analysis of recurrences satisfied by $u\mapsto \me \eee^{uL_n}$ and $u\mapsto \me \eee^{uD_n}$ for $u\geq 0$ and $n\in\mn$, respectively. The recurrences are actually consequences of the distributional equalities
\begin{equation}\label{eq:recur}
\quad L_n\overset{{\rm d}}{=} L_{n-J_n}+1,\quad n\geq 2,\quad D_n\overset{{\rm d}}{=} D_{n-J_n}+\tau_n,\quad n\geq 2,\quad D_1=L_1=0,
\end{equation}
where $\overset{{\rm d}}{=}$ denotes equality of distributions. Here, $J_n$ is independent of $(L_k, D_k)_{k\geq 2}$ on the right-hand sides and has distribution $$\mmp\{J_n=k\}=\frac{1}{kh_{n-1}},\quad k=1,2,\ldots, n-1,$$ and $\tau_n$ is independent of $J_n$ and $(D_k)_{k\geq 2}$ and has the exponential distribution of mean $1/h_{n-1}$. In a more recent article \cite{Kolesnik:2025}, the central limit theorem for $L_n$ was derived from Theorem 2.1 in \cite{Neininger+Rueschendorf:2004}, which provides sufficient conditions ensuring the validity of a central limit theorem for distributional solutions to recurrences like \eqref{eq:recur} and more general ones. Even more recently, the scope of that approach was extended in \cite{Brandenberger+Chin+Mossel:2025}. Among other things, this enabled the authors of \cite{Brandenberger+Chin+Mossel:2025} to prove a central limit theorem for the total length of a continuous-time critical beta-splitting tree with $n$ leaves as $n\to\infty$.

The purpose of this note is to prove the joint convergence of $(L_{n,1},\ldots, L_{n,j}, L_n, D_n)$, properly centered and normalized, as $n\to\infty$ for each $j\in\mn$.
\begin{thm}\label{thm:main2}
As $n\to\infty$,
\begin{equation}\label{eq:lndn}
\Big(\frac{L_n-(2\zeta(2))^{-1}(\log n)^2}{(2\zeta(3)(\zeta(2))^{-3}(\log n)^3 )^{1/2}}, \frac{D_n-(\zeta(2))^{-1}\log n}{(2\zeta(3)(\zeta(2))^{-3}\log n)^{1/2}}\Big)~\dod~\Big(\int_0^1 W^\ast(u){\rm d}u, W^\ast(1)\Big),
\end{equation}
where $W^\ast:=(W^\ast(u))_{u\in [0,1]}$ is a standard Brownian motion.

For any fixed $j\in\mn$, as $n\to\infty$,
\begin{equation}\label{eq:lnr}
\Big(\frac{L_{n,r}-(\zeta(2)r)^{-1}\log n}{(\log n)^{1/2}}\Big)_{1\leq r\leq j}~\dod~\big((2\zeta(3)(\zeta(2))^{-3})^{1/2}r^{-1} W^\ast(1)+(r\zeta(2))^{-1/2}Y_r\big)_{1\leq r\leq j}
\end{equation}
for the same Brownian motion $W^\ast$ as in \eqref{eq:lndn}. Here, $Y_1$, $Y_2,\ldots, Y_j$ are independent random variables with the standard normal distribution, which are independent of $W^\ast$. Also, limit relations \eqref{eq:lndn} and \eqref{eq:lnr} hold jointly.
\end{thm}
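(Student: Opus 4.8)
The plan is to realize the distributional recursion \eqref{eq:recur} on one probability space through the Markov chain of subtree sizes, pass to the logarithmic scale, and recognize the resulting increasing chain as a time-inhomogeneous version of a driftless subordinator --- this being the `balls-in-boxes'/subordinator link of \cite{Iksanov:2025}. Concretely, let $(M_i)_{i\geq 0}$ be the Markov chain with $M_0=n$, absorbed at $1$, and $\mmp\{M_i=m-k\mid M_{i-1}=m\}=(kh_{m-1})^{-1}$ for $1\leq k\leq m-1$; write $\xi_i:=M_{i-1}-M_i$ for the decrements and let $(\tau^{(i)})_{i\geq 1}$ be conditionally independent with $\tau^{(i)}$ exponential of mean $1/h_{M_{i-1}-1}$. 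Then $L_n$ (the absorption time), $D_n=\sum_{i=1}^{L_n}\tau^{(i)}$ and $L_{n,r}=\#\{i\leq L_n:\xi_i=r\}$ solve \eqref{eq:recur}. Put $R_i:=\log n-\log M_i$, increasing from $0$ to $\log n$, with increments $\Delta_i=\log(M_{i-1}/M_i)$. The computation at the heart of the argument is that, conditionally on $M_{i-1}=m$, $\mmp\{\Delta_i>z\mid M_{i-1}=m\}\sim(\log m)^{-1}\log\tfrac{1}{1-\eee^{-z}}=(\log m)^{-1}\bar\rho(z)$ as $m\to\infty$, where $\rho(\mathrm{d}z)=(\eee^z-1)^{-1}\mathrm{d}z$ is the Lévy measure of a driftless subordinator; since $\int_0^\infty z^{s-1}(\eee^z-1)^{-1}\mathrm{d}z=\Gamma(s)\zeta(s)$, this gives $\me[\Delta_i\mid M_{i-1}=m]=\zeta(2)/\log m+O((\log m)^{-2})$ and $\me[\Delta_i^2\mid M_{i-1}=m]=2\zeta(3)/\log m+o((\log m)^{-1})$.

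Next, introduce the Doob decomposition $R_i=A_i+\mathcal{M}_i$ relative to the natural filtration $(\mathcal{F}_i)$, with compensator $A_i=\sum_{l\leq i}\me[\Delta_l\mid\mathcal{F}_{l-1}]$ and martingale $\mathcal{M}_i$, and let $\mathcal{N}_{r,i}$ be the martingale part of $i\mapsto\#\{l\leq i:\xi_l=r\}$. For $u\in[0,1]$ set $\theta_n(u):=\inf\{i:R_i\geq u\log n\}$; note $\theta_n(1)=L_n$, that $R_{\theta_n(1)}=\log n$ exactly since $M_{L_n}=1$, and that $\sup_{u}(R_{\theta_n(u)}-u\log n)\leq\max_{l\leq L_n}\Delta_l=o_P((\log n)^{1/2})$ because the jump probabilities are governed by the rapidly decaying $\bar\rho$. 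Writing $C_i:=\sum_{l\leq i}(\log M_{l-1})^{-1}$, one checks $A_i=\zeta(2)C_i+O_P(\log\log n)$ and $\sum_{l\leq L_n}(\log M_{l-1})^{-2}=O_P(\log\log n)$ (split the $M_l$'s into dyadic blocks, each crossed in $O(\log M)$ steps), together with the law-of-large-numbers facts $\theta_n(u)/(\log n)^2\tp\zeta(2)^{-1}(u-u^2/2)$ and $C_{\theta_n(u)}/\log n\tp u/\zeta(2)$, which are of the same nature as, and can be deduced from, Proposition \ref{thm:main}. Feeding these into the predictable (co)variations of the $\mr^{1+j}$-valued martingale $(\mathcal{M},\mathcal{N}_1,\dots,\mathcal{N}_j)$ time-changed by $\theta_n(u)$ gives $(\log n)^{-1}\langle\mathcal{M}\rangle_{\theta_n(u)}\to(2\zeta(3)/\zeta(2))u$, $(\log n)^{-1}\langle\mathcal{N}_r\rangle_{\theta_n(u)}\to(r\zeta(2))^{-1}u$, and off-diagonal entries $O_P(\log\log n)=o_P(\log n)$ (distinct decrements never co-occur, and $\Delta_l$ is a function of $\xi_l$). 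With asymptotic negligibility of the jumps after dividing by $(\log n)^{1/2}$, the multivariate martingale functional central limit theorem yields, in the Skorokhod space $D[0,1]^{1+j}$,
\[
\Big((\log n)^{-1/2}\mathcal{M}_{\theta_n(u)},(\log n)^{-1/2}\mathcal{N}_{1,\theta_n(u)},\dots,(\log n)^{-1/2}\mathcal{N}_{j,\theta_n(u)}\Big)_{u\in[0,1]}~\dod~\Big(\kappa W^\ast,(\zeta(2))^{-1/2}B_1,\dots,(j\zeta(2))^{-1/2}B_j\Big),
\]
where $\kappa:=(2\zeta(3)/\zeta(2))^{1/2}$ and $W^\ast,B_1,\dots,B_j$ are independent standard Brownian motions; set $Y_r:=B_r(1)$.

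It remains to express the three quantities as (asymptotically) continuous functionals of these martingales. The martingale parts of $D_n$ and of the compensator of $L_{n,r}$ have quadratic variations $O_P(\log\log n)$, hence are $o_P((\log n)^{1/2})$; combined with $h_{m-1}=\log m+O(1)$ this gives $D_n=\zeta(2)^{-1}(\log n-\mathcal{M}_{L_n})+o_P((\log n)^{1/2})$ and $L_{n,r}=r^{-1}D_n+\mathcal{N}_{r,L_n}+o_P((\log n)^{1/2})$. For $L_n$ one uses the exact identity $L_n=\sum_{l\leq L_n}(\log n-R_{l-1})(C_l-C_{l-1})$ (valid since $C_l-C_{l-1}=(\log M_{l-1})^{-1}$), substitutes $R_{l-1}=\zeta(2)C_{l-1}+\mathcal{M}_{l-1}+O_P(\log\log n)$, and uses $C_{L_n}=\zeta(2)^{-1}(\log n-\mathcal{M}_{L_n})+O_P(\log\log n)$ together with the clock asymptotics $C_{\theta_n(u)}\sim u\log n/\zeta(2)$ to turn the emerging Riemann sums into $\int_0^1\cdot\,\mathrm{d}u$ (a continuous map on $D[0,1]$), arriving at
\[
L_n=(2\zeta(2))^{-1}(\log n)^2-\zeta(2)^{-1}(\log n)^{3/2}\int_0^1(\log n)^{-1/2}\mathcal{M}_{\theta_n(u)}\,\mathrm{d}u+o_P((\log n)^{3/2}).
\]
Substituting the functional limit of the previous paragraph, invoking the continuous mapping theorem for the maps $f\mapsto f(1)$ and $f\mapsto\int_0^1 f$, and simplifying the constants (using that $-W^\ast$ is again a standard Brownian motion) yields \eqref{eq:lndn} and \eqref{eq:lnr} jointly; the independence of $Y_1,\dots,Y_j$ and $W^\ast$ comes from the diagonal form of the limiting covariation matrix.

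The main obstacle is the middle step: making the time-inhomogeneity of $R$ precise. One must justify the self-referential deterministic clock $\theta_n(u)\sim\zeta(2)^{-1}(u-u^2/2)(\log n)^2$ uniformly in $u$ (via a stochastic-approximation/sandwiching argument, or a reduction to a genuine renewal statement), control overshoots, verify the Lindeberg and tightness conditions for the time-changed martingale despite $\Delta_l$ being unbounded, and pin down the error in the local-moment expansions of $\Delta_l$ finely enough that it remains $o(\log n)$ after summation over all $L_n\asymp(\log n)^2$ steps; the bookkeeping that converts the Riemann sums into $\int_0^1 W^\ast$, with its random upper limits $C_{L_n}$ and $L_n$, also needs care. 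By contrast, the martingale functional CLT (used as a black box) and the final continuous-mapping step are routine.
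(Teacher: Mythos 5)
Your proposal takes a genuinely different route from the paper's. The paper first establishes (Proposition \ref{prop:eqdistr}) a distributional \emph{equality} between the tree quantities and the occupancy counts of a `balls-in-boxes' scheme driven by the subordinator $S$ with L\'evy measure $\nu(\mathrm{d}x)=\frac{\eee^{-x}}{1-\eee^{-x}}\mathrm{d}x$ (equivalently $(\eee^x-1)^{-1}\mathrm{d}x$, the same $\rho$ you wrote down), and then handles those occupancy functionals by quoting existing theory: the functional CLT for $S^\leftarrow$ together with Theorem 3.1(a) of \cite{Gnedin+Iksanov:2012} for the pair $(K_n,S^\leftarrow(E_{n,n}))$, and Theorem 15 of \cite{Gnedin etal:2006} plus an $A(t,r)+B(t,r)$ decomposition for the $K_{n,r}$. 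You instead work directly on the Markov chain $(M_i)$ of subtree sizes, pass to the log-scale $R_i=\log n-\log M_i$, and carry out a Doob decomposition and a time-changed multivariate martingale FCLT from scratch. The two approaches rest on the same underlying mechanism (the chain behaves on a logarithmic scale like the subordinator slowed by a factor $\log M$, which is precisely what the balls-in-boxes coupling realizes exactly), but yours is self-contained in principle while the paper's outsources the hardest estimates to the regenerative-composition literature. Your computations that I checked are correct: the tail asymptotics of $\Delta_i$, the local moments $\zeta(2)/\log m$ and $2\zeta(3)/\log m$, the clock asymptotics $\theta_n(u)/(\log n)^2\tp\zeta(2)^{-1}(u-u^2/2)$ and $C_{\theta_n(u)}/\log n\tp u/\zeta(2)$, the orders $O_P(\log\log n)$ of the cross-(co)variations, the cancellation of the $\mathcal{M}_{L_n}$-terms in the expansion of $L_n$, and the final constant bookkeeping (after replacing $W^\ast$ by $-W^\ast$). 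What buys what: the paper's route is shorter and rigorous out of the box; yours is more transparent about where the Brownian motion and the independent $Y_r$'s come from, and avoids both the Poissonization used in the paper's Proposition \ref{prop:gnedin} and the need to verify Proposition \ref{prop:eqdistr} — but, as you yourself flag, it leaves nontrivial analytic work undone: uniform control of the random time change $\theta_n$, overshoot bounds, Lindeberg conditions for unbounded jumps $\Delta_l$, and the conversion of the random-upper-limit Riemann sums to $\int_0^1 W^\ast(u)\,\mathrm{d}u$ via the continuous mapping theorem. None of these gaps looks fatal, but they are exactly the technical content that the paper sidesteps by citing \cite{Gnedin+Iksanov:2012} and \cite{Gnedin etal:2006}.
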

\begin{rem}
Observe that $\me\big[\big(\int_0^1 W^\ast(u){\rm d}u\big)^2\big]=1/3$. This explains the fact that the denominator of the fraction involving $L_n$ in Theorem \ref{thm:main2} differs by the factor $3^{-1/2}$ from that in Proposition \ref{thm:main}.
\end{rem}

It was shown in \cite{Iksanov:2025} that, for each $n\in\mn$, there is a connection 
between beta-splitting tree with $n$ leaves and certain `balls-in-boxes' scheme with $n$ balls and infinitely many boxes. Using that connection, 
discussed in some detail in Section \ref{sect:coupling}, we provide in Section \ref{sect:proofs} a reasonably simple proof of Theorem \ref{thm:main2}.

\section{Our strategy}\label{sect:coupling}

It is important for what follows that the distributional equalities in \eqref{eq:recur} hold true jointly, also together with the corresponding distributional recurrences for $(L_{n,1},\ldots, L_{n,j})$, $j\in\mn$, $j\leq n-1$. Fix such a $j$. For $n\geq 1$, put $X_n:=(L_{n,1},\ldots, L_{n,j}, L_n, D_n)$ and $Y_n:=(\1_{\{J_n=1\}},\ldots, \1_{\{J_n=j\}}, 1, \tau_n)$. Then
\begin{equation}\label{eq:recur1}
X_n\overset{{\rm d}}{=} X_{n-J_n}+Y_n,\quad n\geq 2, \quad X_1=\underbrace{(0,0, 0,\ldots, 0)}_{j+2~\text{zeros}}.
\end{equation}

Let $S:=(S(t))_{t\geq 0}$ be a subordinator (an almost surely increasing L{\'e}vy process) with $S(0)=0$, zero drift, no killing and the L{\'e}vy measure $\nu$ defined by
\begin{equation}\label{eq:measure}
\nu({\rm d}x)=\frac{\eee^{-x}}{1-\eee^{-x}}\1_{(0,\infty)}(x)\,{\rm d}x.
\end{equation}
Let $E_1,\ldots, E_n$ be independent random variables with the exponential distribution of unit mean, which are independent of $S$. The closed range of $S$ 
has zero Lebesgue measure and splits the positive halfline into infinitely many disjoint intervals that we call gaps. To obtain the aforementioned `balls-in-boxes' scheme, take the gaps in the role of boxes and the points of the exponential sample in the role of balls. Denote by $K_n$ the number of gaps occupied by at least one $E_j$, $j=1,2,\ldots, n$. Also, for $j\in\mn$, $j\leq n$, denote by $K_{n,j}$ the number of gaps occupied by $j$ 
elements of the exponential sample $E_1,\ldots, E_n$. Note that $K_n=\sum_{j=1}^n K_{n,j}$ and $\sum_{j=1}^n jK_{n,j}=n$.  

Put $S^\leftarrow(t):=\inf\{u\geq 0: S(u)>t\}$ for $t\geq 0$ and $E_{n,n}:=\max (E_1,\ldots, E_n)$ for $n\in\mn$. Here is the basic observation which enables us to connect critical beta-splitting trees with the `balls-in-boxes' scheme. 
\begin{assertion}\label{prop:eqdistr}
For each $n\geq 1$ and each $j\in\mn$, $j\leq n$, $$(L_{n+1, 1},\ldots, L_{n+1,j}, L_{n+1}, D_{n+1})\overset{{\rm d}}{=} (K_{n,1},\ldots, K_{n,j}, K_n, S^\leftarrow (E_{n,n})).$$
\end{assertion}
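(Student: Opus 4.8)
The plan is to establish Proposition~\ref{prop:eqdistr} by an induction on $n$, matching the distributional recurrence \eqref{eq:recur1} for the tree side against a structurally identical recurrence for the occupancy side. The key is that the subordinator $S$ together with the exponential sample $E_1,\ldots,E_n$ admits a self-similar recursive decomposition driven by the number of balls falling in the first gap. Concretely, condition on $S$ and on the value of the first jump, equivalently on the position of the first gap $(0,S(0+))$ in terms of the renewal-type structure; the probability that a single $\mathrm{Exponential}(1)$ ball lands in that first gap can be computed from the Laplace exponent $\Phi$ of $S$, and one checks that the number $M_n$ of balls (out of $n$) in the first gap has, after the appropriate conditioning, exactly the distribution $\mathmbox{\mathbb{P}\{J_{n+1}=k\}=1/(kh_n)}$ that governs the splitting of a set of $n+1$ elements. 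Given $M_n=k$, the remaining $n-k$ balls, shifted by $S(0+)$, together with the shifted subordinator $(S(t)-S(0+))_{t\ge S^\leftarrow(S(0+))}$ form a fresh independent copy of the same scheme with $n-k$ balls, and the counts $K_{n,r}$ decompose as $\1_{\{M_n=r\}}$ plus the corresponding count in the fresh copy, $K_n$ decomposes as $1$ plus the fresh copy, and $S^\leftarrow(E_{n,n})$ decomposes (on the event that the maximum is not in the first gap, which happens with probability tending to $1$ and is handled by the $n-k\ge 1$ case) as $S^\leftarrow$ of the shifted maximum plus an exponential increment. This mirrors \eqref{eq:recur1} termwise with $Y_{n+1}=(\1_{\{M_n=1\}},\ldots,\1_{\{M_n=j\}},1,\text{increment})$, so the induction closes provided the marginal law of $M_n$ and the conditional independence both check out.

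I expect the identification of the law of $M_n$ — the number of exponential balls in the first gap — with the splitting law $\mathmbox{\mathbb{P}\{J_{n+1}=k\}=1/(kh_n)}$ to be the technical heart of the argument. The natural route is via the Laplace exponent: for the Lévy measure \eqref{eq:measure} one has $\Phi(s)=\int_0^\infty (1-\eee^{-sx})\,\nu(\mathrm{d}x)$, and a computation using $\int_0^\infty \eee^{-sx}\eee^{-x}/(1-\eee^{-x})\,\mathrm{d}x=\psi(s+1)+\gamma$ (digamma function) gives $\Phi(s)=\sum_{k\ge 1}\big(1/k-1/(k+s)\big)=h_s$ in the appropriate sense, so that $\Phi(n)=h_n$. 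One then argues that, by exchangeability of the balls and the strong Markov property of $S$ at the first passage time, the number of balls in the first gap, given a fixed total of $n$ balls, has probability generating function or falling-factorial moments dictated by $\Phi$; the ratio structure $\Phi(k)/\Phi(n)$-type weights should collapse precisely to $1/(kh_n)$. An alternative, perhaps cleaner, route is to invoke the sampling formula for the partition of $\{1,\ldots,n\}$ induced by the gaps — this is a Gibbs-type / Ewens-Pitman-adjacent structure whose single-block marginal is explicitly the critical beta-splitting law — and this is presumably what \cite{Iksanov:2025} already records; if so, one simply cites it.

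The remaining steps are routine but worth itemizing: (i) the base case $n=1$ is the trivial identity $(L_{2,1},L_2,D_2)\overset{\rm d}{=}(K_{1,1},K_1,S^\leftarrow(E_1))$, where $K_{1,1}=K_1=1$ almost surely since a single ball occupies exactly one gap, matching $L_{2,1}=L_2=1$, and $D_2\overset{\rm d}{=}\tau_2\sim\mathrm{Exponential}(h_1)=\mathrm{Exponential}(1)\overset{\rm d}{=}S^\leftarrow(E_1)$ because $S^\leftarrow(E_1)$ equals the first passage level index, distributed as the first jump location's preimage, which one checks is $\mathrm{Exponential}(\Phi(1))=\mathrm{Exponential}(1)$; (ii) the conditional independence of $Y_{n+1}$ from the fresh copy follows from the independence of $E_1,\ldots,E_n$ from $S$ and the strong Markov property; (iii) one must check the continuous-height increment: given $M_n=k$ with $k\le n-1$, the shift $S^\leftarrow(E_{n,n})-S^\leftarrow(E_{n-k,n-k}\text{-analog})$ is an $\mathrm{Exponential}(h_n)$ variable independent of the rest, matching $\tau_{n+1}$, which again reduces to the computation $\Phi(n)=h_n$. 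The only genuinely delicate point beyond the law of $M_n$ is bookkeeping the $j$-dependence uniformly, i.e.\ that a single coupling serves all $j\le n$ simultaneously — but since $K_{n,r}$ for every $r$ is read off the same realization of $(S,E_1,\ldots,E_n)$, this is automatic, and the recurrence \eqref{eq:recur1} was already stated jointly in all coordinates.
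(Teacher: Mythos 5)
Your plan --- establish for the occupancy side a distributional recursion that mirrors \eqref{eq:recur1} termwise and then compare --- is exactly the route the paper takes. The paper's device is the decreasing continuous-time Markov chain $N_n(t)=\#\{1\le k\le n: E_k>S(t)\}$; its first jump drops by the number $J_n^\ast$ of points in the left-most occupied gap, after an $\mathrm{Exp}(h_n)$ waiting time $\tau_n^\ast$, and after that jump the strong Markov property restarts the scheme with $n-J_n^\ast$ balls. Setting $X_n^\ast=(K_{n,1},\dots,K_{n,j},K_n,S^\leftarrow(E_{n,n}))$ and $Y_n^\ast=(\1_{\{J_n^\ast=1\}},\dots,\1_{\{J_n^\ast=j\}},1,\tau_n^\ast)$ gives precisely $X_n^\ast\overset{\mathrm d}{=}X^\ast_{n-J_n^\ast}+Y_n^\ast$, matching \eqref{eq:recur1} with a shift of index, which is what you are aiming for.

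However, there is a genuine gap, and it sits exactly where you flagged it. You write that identifying the law of $M_n$ (your notation for $J_n^\ast$) with $\mathbb P\{J_{n+1}=k\}=1/(kh_n)$ ``is the technical heart'' and then hedge between a Laplace-exponent computation and ``one simply cites it.'' Neither is actually carried out. The paper closes this by citing Theorem 5.2(i) of Gnedin and Pitman (2005), which gives the one-step transition probability of $N_n$ from $m$ to $m-k$ as
$$\frac{\binom{m}{k}\int_{(0,\infty)}(1-\eee^{-x})^k\eee^{-x(m-k)}\nu(\mathrm d x)}{\sum_{k=1}^m\binom{m}{k}\int_{(0,\infty)}(1-\eee^{-x})^k\eee^{-x(m-k)}\nu(\mathrm d x)},$$
and for $\nu$ as in \eqref{eq:measure} the numerator reduces (via a Beta integral, not a ratio of $\Phi$-values as you suggest) to $1/k$ while the normalizer is $h_m$. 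Your phrase ``the ratio structure $\Phi(k)/\Phi(n)$-type weights should collapse precisely to $1/(kh_n)$'' is not what happens and would not, as stated, produce the answer. Similarly, the waiting time must be shown to be $\mathrm{Exp}(h_n)$; this follows from the same Gnedin--Pitman formula by summing the numerators, which you again leave implicit.

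A second, smaller issue: your conditioning on ``the value of the first jump'' and ``the first gap $(0,S(0+))$'' is ill-posed here. The L\'evy measure \eqref{eq:measure} is infinite, so $S$ has infinite jump activity, $S(0+)=0$, and there is no first jump or first gap. The object you actually need is the \emph{left-most occupied gap}, i.e.\ the gap containing $\min(E_1,\dots,E_n)$, and the passage time of $S$ across it; that is precisely what the Markov chain $N_n$ and its first decrement encode. Once you phrase it that way and import the Gnedin--Pitman transition formula, your outline closes into the paper's proof; as written, the key distributional identity is asserted rather than proved.
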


With this at hand, limit relation \eqref{eq:lndn} in Theorem \ref{thm:main2} is a consequence of the following result, in which $\nu$ is a nonzero L\'{e}vy measure, not necessarily the same as given in \eqref{eq:measure}. Put
$$\Phi(t):=\int_{(0,\,\infty)}\big(1-\exp(-t(1-\eee^{-x}))\big)\nu({\rm  d}x),\quad t>0,$$ $${\tt m_2}:={\rm Var}\,[S(1)]=\int_{(0,\,\infty)}x^2\nu({\rm d}x), ~~~{\tt m_1}:={\mathbb E}[S(1)]=
\int_{(0,\,\infty)} x \nu({\rm d}x).$$
\begin{thm}\label{thm:31a}
Assume that $t\mapsto \Phi(\eee^t)$ is regularly varying at $\infty$ of positive index $\beta$ and that ${\tt m_2}\in (0, \infty)$.
Then, as $n\to\infty$,
\begin{multline*}
\Big(\frac{K_n-{\tt m}_1^{-1}\int_1^ n y^{-1}\Phi(y){\rm d}y}{({\tt m_2}{\tt m}^{-3}_1\log n)^{1/2}\Phi(n)}, \frac{S^\leftarrow (E_{n,n})-{\tt m}_1^{-1}\log n}{({\tt m}_2{\tt m}_1^{-3}\log n)^{1/2}}\Big)\\~\dod~\Big(
\beta \int_0^1 W(1-u)u^{\beta-1}{\rm d}u, W(1)\Big),
\end{multline*}
where $W:=\big(W(u)\big)_{u\in[0,1]}$ is a standard Brownian motion.
\end{thm}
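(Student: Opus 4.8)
The plan is to reduce everything to a functional limit theorem for the renewal-type counting functions associated with the subordinator $S$, and then read off the joint law of $(K_n, S^\leftarrow(E_{n,n}))$ from a single Brownian approximation. First I would introduce the first-passage function $S^\leftarrow$ and record that, since $S$ has no drift and finite second moment ${\tt m}_2$, the process $u\mapsto S^\leftarrow(u)$ obeys a functional central limit theorem: $\big(S^\leftarrow(tu)-{\tt m}_1^{-1}tu\big)/\sqrt{{\tt m}_2{\tt m}_1^{-3}t}$ converges in $D[0,1]$ to a standard Brownian motion $W$ as $t\to\infty$. Plugging in $u=1$ and $t=\log n$, together with the fact that $E_{n,n}=\log n+O_P(1)$ (so $S^\leftarrow(E_{n,n})=S^\leftarrow(\log n)+o_P(\sqrt{\log n})$ by the local modulus of continuity of the Brownian limit), immediately yields the second coordinate of the stated convergence and identifies its limit with $W(1)$.

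Next I would handle $K_n$, the number of occupied gaps among the first $n$ balls. The key structural fact is that, conditionally on $S$, each gap of length $x$ (i.e. a jump of $S$ of size $x$) is occupied by at least one of the $n$ i.i.d.\ $\mathrm{Exp}(1)$ points with probability depending on the position and length of that gap, and in the infinite-occupancy literature (Gnedin–Iksanov–Marynych type results) $K_n$ concentrates around its mean $\me K_n\sim {\tt m}_1^{-1}\int_1^n y^{-1}\Phi(y)\,{\rm d}u$, up to fluctuations governed by the \emph{same} randomness as $S^\leftarrow$. Concretely, I would use the representation $K_n=\#\{\text{jumps of }S\text{ in }[0,\,?]\text{ that capture a ball}\}$ and the Poissonization trick: replace the fixed sample size $n$ by an independent $\mathrm{Poisson}(n)$ number of balls, which turns the balls into a Poisson process and makes $K_n^{\mathrm{Poi}}$, conditionally on $S$, a sum of independent indicators over the jumps of $S$. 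Since the exponential balls fall at typical scale $\log n$, a gap at position $S(u)\approx {\tt m}_1 u$ is captured roughly when $S(u)\lesssim \log n$, i.e.\ $u\lesssim S^\leftarrow(\log n)$; writing $\Phi$ for the relevant hazard and linearising, one gets $K_n^{\mathrm{Poi}}=\int_0^{S^\leftarrow(\log n)}(\text{bounded integrand})\,{\rm d}(\text{something})+(\text{martingale fluctuation})$. The regular variation of $t\mapsto\Phi(\eee^t)$ of index $\beta$ is exactly what is needed to evaluate the deterministic integral and to see that the dominant fluctuation of $K_n$ is $\Phi(n)$ times a functional of the trajectory $\big(S^\leftarrow(u\log n)\big)_{u\in[0,1]}$, namely $\beta\int_0^1 (\text{Brownian fluctuation at time }1-u)\,u^{\beta-1}\,{\rm d}u$ after an integration by parts.

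The main obstacle, and where I would spend the most care, is the last step: showing that the centred $K_n$ is asymptotically a \emph{continuous functional} of the same Brownian path whose terminal value drives $S^\leftarrow(E_{n,n})$, so that the two coordinates converge \emph{jointly} with the claimed correlation structure rather than as independent Gaussians. This requires (i) controlling the Poissonization error $K_n-K_n^{\mathrm{Poi}}$ — standard, via a de-Poissonization lemma, since $K_n$ is monotone in the sample — and (ii) replacing the conditionally-Poisson count by its conditional mean with an error of smaller order than $\sqrt{\log n}\,\Phi(n)$, which is where the finiteness of ${\tt m}_2$ and the growth $\Phi(n)\to\infty$ with $\Phi$ regularly varying of positive index enter, guaranteeing the martingale part is negligible. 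Granting (i)–(ii), the deterministic integral $\int_1^n y^{-1}\Phi(y)\,{\rm d}y$ cancels the centering, and what remains is $\Phi(n)$ times $\beta\int_0^1 W(1-u)u^{\beta-1}\,{\rm d}u$ after substituting the FCLT for $S^\leftarrow$ and applying the continuous mapping theorem; since this integral and $W(1)$ are built from one Brownian motion $W$, the joint convergence with the stated (correlated) limit follows at once.
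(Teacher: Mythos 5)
Your plan follows the same overall mechanism as the paper: a functional CLT for $S^\leftarrow$ obtained from the CLT for $S$ via inversion (Vervaat's lemma), replacement of $E_{n,n}$ by $\log n$ for the second coordinate, and a representation of the centred $K_n$ as a functional of the same rescaled $S^\leftarrow$ path, with the density $\beta u^{\beta-1}$ arising from the regular variation of $t\mapsto\Phi(\eee^t)$. Where you diverge is in how you arrive at the $K_n$ representation. You propose to redo the occupancy analysis from scratch: Poissonize, write $K_n^{\mathrm{Poi}}$ conditionally on $S$ as a sum of independent indicators over gaps, show the martingale (occupancy) fluctuation is $o_P\big(\sqrt{\log n}\,\Phi(n)\big)$, and de-Poissonize. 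The paper instead cites Theorem~3.1(a) of Gnedin and Iksanov (2012) for the marginal law of $K_n$ and only \emph{inspects} that proof to extract the explicit driving functional $\int_{[0,1]}W_t(u)\,{\rm d}_u\big(\Phi(\eee^{tu})/\Phi(\eee^t)\big)$, with $W_t(u)$ the centred and scaled $S^\leftarrow(t(1-u))$; the joint law with $S^\leftarrow(E_{n,n})$ then falls out by a Cram\'er--Wold device. Your route is self-contained but essentially reproves the cited theorem; the paper's is shorter but presumes access to that proof. One step your sketch glosses over, and which the paper must treat carefully, is the passage to the limit of the integral: both the integrand $W_t$ and the integrating measure ${\rm d}_u\big(\Phi(\eee^{tu})/\Phi(\eee^t)\big)$ move with $t$, so this is not a bare application of the continuous mapping theorem. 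The paper invokes a dedicated tool, Lemma~\ref{lem:conv}(a) (simultaneous weak convergence of probability measures on $[0,1]$ and of the process in $D[0,1]$, with an absolutely continuous limiting measure); you would need the same lemma, or an equivalent, to close your argument rigorously.
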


Now let $\nu$ be as in \eqref{eq:measure}. According to a Hurwitz identity (see, for instance, formula (23.2.7) in \cite{Abram}), $${\tt m_r}:=\int_{(0,\infty)}x^r \nu({\rm d}x)=\int_0^1 x^{-1}(-\log (1-x))^r{\rm d}x=\Gamma(r+1)\zeta(r+1),\quad r>0,$$ where $\Gamma$ is the Euler gamma function. In particular, ${\tt m_1}:=\zeta(2)$ and ${\tt m_2}=2\zeta(3)$. Further,
\begin{multline}\label{eq:Phi}
\Phi(t)=\int_{(0,\,\infty)}\big(1-\exp\{-t(1-\eee^{-x})\}\big)\nu({\rm  d}x)=\int_0^\infty 
\big(1-\exp\{-t(1-\eee^{-x})\}\big)\frac{\eee^{-x}}{1-\eee^{-x}}{\rm  d}x\\=\int_0^t y^{-1}(1-\eee^{-y}){\rm d}y=\log t+\gamma+O(\eee^{-t}),\quad t\to\infty,
\end{multline}
where $\gamma=\int_0^1 y^{-1}(1-\eee^{-y}){\rm d}y-\int_1^\infty y^{-1}\eee^{-y}{\rm d}y$ is the Euler-Mascheroni constant. This shows that $\beta$ in Theorem \ref{thm:31a} is equal to $1$ and $$\int_1^ n y^{-1}\Phi(y){\rm d}y=(\log n)^2/2+\gamma \log n+o(\log n)=(\log n)^2/2+o((\log n)^{3/2}),\quad n\to\infty.$$ Thus, limit relation \eqref{eq:lndn} in Theorem \ref{thm:main2} does indeed follow from Proposition \ref{prop:eqdistr} and Theorem \ref{thm:31a}.

In view of Proposition \ref{prop:eqdistr}, limit relation \eqref{eq:lnr} and the joint convergence in Theorem \ref{thm:main2} follow from Proposition \ref{prop:gnedin} given next. We write $\Phi^{(r)}$ to denote the $r$th derivative of $\Phi$.
\begin{assertion}\label{prop:gnedin}
Let $j\in\mn$. Assume that
\begin{equation}\label{eq:gneassum}
\Phi(t)=\log t+c+O(t^{-\varepsilon}),\quad t\to\infty
\end{equation}
for some constants $c\in\mr$ and $\varepsilon>0$, and that $\nu$ has a continuous density on $(0,\infty)$ and satisfies $\nu([y,\infty])=O(\eee^{-\delta y})$ as $y\to\infty$ for some $\delta>0$. Assume also that, for each $r=1,\ldots, j$, the function $t\mapsto (-1)^{r+1}\Phi^{(r)}(t)t^r$ is nondecreasing on $(1,\infty)$. Then, as $n\to\infty$,
\begin{equation}\label{eq:knr}
\Big(\frac{K_{n,r}-({\tt m}_1 r)^{-1}\log n}{(\log n)^{1/2}}\Big)_{1\leq r\leq j}~\dod~\big(({\tt m}_2{\tt m}_1^{-3})^{1/2}r^{-1} W(1)+(r{\tt m}_1)^{-1/2}Z_r\big)_{1\leq r\leq j}
\end{equation}
for the same Brownian motion $W$ as in Theorem \ref{thm:31a}. Here, $Z_1$, $Z_2,\ldots, Z_j$ are independent random variables with the standard normal distribution, which are independent of $W$. Also, limit relations of Theorem \ref{thm:31a} and Proposition \ref{prop:gnedin} hold jointly, namely, as $n\to\infty$,
\begin{multline}\label{eq:joint}
\Big(\Big(\frac{K_{n,r}-({\tt m}_1 r)^{-1}\log n}{(\log n)^{1/2}}\Big)_{1\leq r\leq j}, \frac{K_n-(2{\tt m}_1)^{-1}(\log n)^2}{(\log n)^{3/2}}, \frac{S^\leftarrow (E_{n,n})-{\tt m}_1^{-1}\log n}{(\log n)^{1/2}}\Big)\\~\dod~\Big(\big(({\tt m}_2{\tt m}_1^{-3})^{1/2}r^{-1} W(1)+(r{\tt m}_1)^{-1/2}Z_r\big)_{1\leq r\leq j}, ({\tt m}_2{\tt m}_1^{-3})^{1/2}\int_0^1 W(u){\rm d}u, ({\tt m}_2{\tt m}_1^{-3})^{1/2} W(1)\Big).
\end{multline}
\end{assertion}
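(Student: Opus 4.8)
Here is the plan.

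\smallskip

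The plan is to Poissonize the number of balls, condition on the subordinator $S$, and reuse the functional limit theorem for the first-passage process of $S$ that already underlies the proof of Theorem~\ref{thm:31a}. Replace the deterministic number of balls by an independent ${\rm Poisson}(n)$ number; by a de-Poissonization argument it suffices to prove the assertion in this Poissonized model (which we keep denoting $K_{n,r}$, $K_n$), and it is exactly here that the hypothesis that $t\mapsto (-1)^{r+1}\Phi^{(r)}(t)t^r$ is nondecreasing is used, the variables $K_{n,r}$ — unlike $K_n$ — not being monotone in $n$. Conditionally on $S$, the scheme is a classical infinite occupancy scheme with the random frequencies $(P_k)_{k\geq 1}$ equal to the lengths of the components of $(0,1)\setminus\overline{\{\eee^{-S(t)}:t\geq 0\}}$ (so that $P_k=\eee^{-S(t_k-)}\big(1-\eee^{-\Delta S(t_k)}\big)$ at the $k$-th jump time $t_k$ of $S$, with $\Delta S(t_k):=S(t_k)-S(t_k-)$, and $\sum_kP_k=1$ a.s.); in particular the boxes are filled independently, box $k$ receiving a ${\rm Poisson}(nP_k)$ number of balls. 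With $p_r(\lambda):=\eee^{-\lambda}\lambda^r/r!$ we then have, conditionally on $S$, that $K_{n,r}=\sum_k\1\{\Pi_k=r\}$ for independent ${\rm Poisson}(nP_k)$ variables $\Pi_k$, so that
$$\me\Big[\exp\Big({\rm i}\sum_{r=1}^j t_rK_{n,r}\Big)\,\Big|\,S\Big]=\prod_k\Big(1+\sum_{r=1}^j\big(\eee^{{\rm i}t_r}-1\big)p_r(nP_k)\Big).$$

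The backbone is a three-scale decomposition of $K_{n,r}$, valid modulo $o_{\mmp}(\sqrt{\log n})$. Let $A^{(r)}(t):=\sum_{0<v\leq t}p_r\big(n\eee^{-S(v-)}(1-\eee^{-\Delta S(v)})\big)$, so that $A^{(r)}(\infty)=\sum_kp_r(nP_k)=\me[K_{n,r}\mid S]$, and let $M^{(r)}$ be the $S$-adapted martingale obtained by subtracting from $A^{(r)}$ its predictable compensator (L\'evy--It\^o compensation formula). Using \eqref{eq:gneassum} (which forces the image of $\nu$ under $x\mapsto 1-\eee^{-x}$ to behave like $w^{-1}\,{\rm d}w$ near $0$) together with the renewal theorem for the potential measure of $S$, one checks that the compensator of $A^{(r)}$ equals $r^{-1}\big(t\wedge S^{\leftarrow}(\log n)\big)$ up to a boundary term which is $o_{\mmp}(\sqrt{\log n})$, whence
$$\me[K_{n,r}\mid S]=\tfrac1r\,S^{\leftarrow}(\log n)+M^{(r)}_\infty+o_{\mmp}(\sqrt{\log n}),\qquad \me[K_{n,r}]=\frac{\log n}{r\,{\tt m}_1}+O(1).$$
Here the $M^{(r)}_\infty$, which are of order $\sqrt{\log n}$, satisfy $[M^{(r)},M^{(r')}]_\infty=\sum_kp_r(nP_k)p_{r'}(nP_k)$ and $(\log n)^{-1}\langle M^{(r)},M^{(r')}\rangle_\infty\to {\tt m}_1^{-1}d_{r,r'}$ in probability for a constant $d_{r,r'}$ depending only on $\nu$ (this is where the continuous-density and $\nu([y,\infty])=O(\eee^{-\delta y})$ hypotheses enter), and — crucially — the bracket of $M^{(r)}$ with the centred subordinator $\widetilde S(t):=S(t)-{\tt m}_1t$ is negligible, $\langle M^{(r)},\widetilde S\rangle_\infty=O_{\mmp}(1)=o(\log n)$, because for $\nu$ as in \eqref{eq:measure} the pertinent integrand is $O(\eee^{S(t)}/n)$, which is integrable in $t$ over $[0,S^{\leftarrow}(\log n)]$. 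Finally, conditionally on $S$, the vector $\big(K_{n,r}-\me[K_{n,r}\mid S]\big)_{1\leq r\leq j}$ is a centred sum of independent bounded summands, amenable to a conditional Lindeberg central limit theorem, whose conditional covariance matrix divided by $\log n$ converges in probability to the deterministic matrix $\big({\tt m}_1^{-1}(r^{-1}\1_{\{r=r'\}}-d_{r,r'})\big)_{1\leq r,r'\leq j}$.

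Now assemble these ingredients through characteristic functions. Conditioning on $S$, take the logarithm of the conditional characteristic function above with $\varepsilon_k:=\sum_{r=1}^j(\eee^{{\rm i}t_r}-1)p_r(nP_k)$, put $t_r=\theta_r/\sqrt{\log n}$, and expand to second order, \emph{keeping} the term $-\tfrac12\sum_k\varepsilon_k^2$: it is of order $1$ (not $o(1)$) and equals $\tfrac12{\tt m}_1^{-1}\sum_{r,r'}\theta_r\theta_{r'}d_{r,r'}+o_{\mmp}(1)$, because $\sum_kp_r(nP_k)p_{r'}(nP_k)=[M^{(r)},M^{(r')}]_\infty$ and $(\log n)^{-1}\sum_kp_r(nP_k)p_{r'}(nP_k)\to{\tt m}_1^{-1}d_{r,r'}$ in probability. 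Combining this with the first-passage functional limit theorem
$$\Big(\frac{S^{\leftarrow}(u\log n)-u\log n/{\tt m}_1}{({\tt m}_2{\tt m}_1^{-3}\log n)^{1/2}}\Big)_{u\in[0,1]}\dod\big(W(u)\big)_{u\in[0,1]}$$
(the limiting Brownian motion $W$ being exactly the one produced in the proof of Theorem~\ref{thm:31a}), with the conditional CLT above, and with the observations that $S^{\leftarrow}(E_{n,n})=S^{\leftarrow}(\log n)+o_{\mmp}(\sqrt{\log n})$ and that the conditional fluctuations of $K_n$ and of $S^{\leftarrow}(E_{n,n})$ about their conditional means are $o_{\mmp}((\log n)^{3/2})$ and $o_{\mmp}(\sqrt{\log n})$ respectively (so those two coordinates reduce to functionals of $S$ already governed by Theorem~\ref{thm:31a}), one finds that the characteristic function of the left-hand vector in \eqref{eq:joint} converges to $\me\big[\exp({\rm i}\,\Lambda(W))\big]\cdot\exp\big(-\tfrac1{2{\tt m}_1}\sum_{r=1}^j\theta_r^2/r\big)$, where $\Lambda(W)$ is the relevant linear functional of $W$ (it carries $\sum_r\theta_r r^{-1}({\tt m}_2{\tt m}_1^{-3})^{1/2}W(1)$, $({\tt m}_2{\tt m}_1^{-3})^{1/2}\int_0^1W(u)\,{\rm d}u$, and $({\tt m}_2{\tt m}_1^{-3})^{1/2}W(1)$). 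Indeed, the quadratic form $\tfrac12{\tt m}_1^{-1}\sum\theta_r\theta_{r'}d_{r,r'}$ generated by $-\tfrac12\sum_k\varepsilon_k^2$ is cancelled exactly by the limiting covariance of $(\log n)^{-1/2}(M^{(1)}_\infty,\dots,M^{(j)}_\infty)$ — which by the martingale CLT is Gaussian with covariance $({\tt m}_1^{-1}d_{r,r'})$ and which is independent of $W$ because $\langle M^{(r)},\widetilde S\rangle_\infty=o(\log n)$ — so the surviving non-$W$ factor is the characteristic function of independent normals $(r{\tt m}_1)^{-1/2}Z_r$, independent of $W$. This gives \eqref{eq:joint}, and \eqref{eq:knr} is its marginalization.

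The main obstacle is the bookkeeping that keeps the three fluctuation scales of $K_{n,r}$ — the macroscopic part $r^{-1}S^{\leftarrow}(\log n)$, the mesoscopic martingale $M^{(r)}$, and the microscopic conditional Poissonian fluctuation — apart, and, above all, the two facts that drive the cancellation: that $M^{(r)}$ is asymptotically independent of the first-passage process ($\langle M^{(r)},\widetilde S\rangle_\infty=o(\log n)$) and that $-\tfrac12\sum_k\varepsilon_k^2$ concentrates on $\tfrac12{\tt m}_1^{-1}\sum\theta_r\theta_{r'}d_{r,r'}$. Together these force the identity $d_{r,r'}+\big(r^{-1}\1_{\{r=r'\}}-d_{r,r'}\big)\equiv r^{-1}\1_{\{r=r'\}}$, i.e. the diagonality of the limiting covariance of the non-$W$ part of $(K_{n,r})_r$, hence the mutual independence of $Z_1,\dots,Z_j$ and their independence of $W$; the cancellation is in fact structural, $[M^{(r)},M^{(r')}]_\infty=\sum_kp_r(nP_k)p_{r'}(nP_k)$ being precisely the quantity that surfaces in $-\tfrac12\sum_k\varepsilon_k^2$. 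The remaining inputs — the first-passage functional CLT for $S$, the de-Poissonization under the stated monotonicity, and the mean estimate $\me[K_{n,r}]=(r{\tt m}_1)^{-1}\log n+O(1)$ read off from \eqref{eq:gneassum} via the compensation formula — are routine given the hypotheses.
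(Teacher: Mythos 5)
Your proposal is essentially correct in its architecture and reaches the right conclusion, but it takes a markedly different — and substantially longer — route than the paper. The paper simply cites Theorem~15 of Gnedin, Pitman and Yor (2006) for the marginal Gaussianity of $\big(K_{n,r}-({\tt m}_1 r)^{-1}\log n\big)/(\log n)^{1/2}$ (together with the Poissonized version~\eqref{eq:relation}), reads off from the covariance ${\tt m}_2{\tt m}_1^{-3}(rs)^{-1}+\1_{\{r=s\}}({\tt m}_1 r)^{-1}$ the decomposition $X_r=({\tt m}_2{\tt m}_1^{-3})^{1/2}r^{-1}Z+({\tt m}_1 r)^{-1/2}Z_r$, and then does only one new thing: it identifies $Z$ with $W(1)$. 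For this it writes
$\mathcal{K}(\eee^t,r)-{\tt m}_1^{-1}\int_0^t h_r=A(t,r)+B(t,r)$
with $h_r(t):=(-1)^{r+1}(r!)^{-1}\Phi^{(r)}(\eee^t)\eee^{rt}$, $B(t,r):=\int_{[0,t]}h_r(t-x)\,{\rm d}S^\leftarrow(x)-{\tt m}_1^{-1}\int_0^t h_r$, and proves $(B(t,r)/t^{1/2})_r\dod({\tt m}_2{\tt m}_1^{-3})^{1/2}W(1)(r^{-1})_r$ via an integration by parts and Lemma~\ref{lem:conv}(b) — this is exactly where the monotonicity of $t\mapsto(-1)^{r+1}t^r\Phi^{(r)}(t)$ is used, to control $\int S^\leftarrow\,{\rm d}_xh_r(tx)$. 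The residual $A(t,r)$ is then forced, by subtraction against the already-known Poissonized marginal~\eqref{eq:relation}, to deliver the independent $({\tt m}_1 r)^{-1/2}Z_r$.

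Your proof instead re-derives the Gnedin--Pitman--Yor input from scratch. Your three-scale split $\mathcal{K}(\eee^t,r)=r^{-1}S^\leftarrow(\log n)+M^{(r)}_\infty+\big(\mathcal{K}-\me[\mathcal{K}\mid S]\big)+o_{\mmp}(\sqrt{\log n})$ is in fact a refinement of the paper's: your macroscopic term is the paper's $B(t,r)$ (note that $\int_0^\infty h_r(\log n - S(v-))\,{\rm d}v=\int_{[0,\infty)}h_r(\log n - s)\,{\rm d}S^\leftarrow(s)$ \emph{is} the compensator of your $A^{(r)}$, and the paper's $A(t,r)=\mathcal{K}(\eee^t,r)-\int h_r(t-x)\,{\rm d}S^\leftarrow(x)$ equals your $M^{(r)}_\infty+(\mathcal{K}-\me[\mathcal{K}\mid S])$). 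What you buy is a self-contained proof with a transparent mechanism for the independence of $(Z_r)_r$ from $W$ — the cross-bracket estimate $\langle M^{(r)},\widetilde S\rangle_\infty=O_{\mmp}(1)$ and the conditional Lindeberg CLT with deterministic limiting covariance. What it costs is a lot: you must establish the martingale functional CLT for $(M^{(r)},\widetilde S)$, the in-probability concentration of the normalized brackets, the mean/boundary-layer estimate for the compensator, and a de-Poissonization of $K_{n,r}$ (which, being non-monotone in $n$, requires second-moment estimates, not merely the monotonicity of $h_r$ — your attribution of the monotonicity hypothesis to the de-Poissonization step is, I believe, misplaced; the paper needs that monotonicity precisely for Lemma~\ref{lem:conv}(b), and GPY's Theorem~15 does the de-Poissonization by other means, which is why the paper explicitly notes it applies ``excluding the monotonicity assumption''). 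So: correct in spirit and instructive as a structural explanation of where the covariance comes from, but a genuinely different and far more laborious path than the one the paper actually takes; if you want to follow your route you should be prepared to spell out the bracket computations, the conditional CLT and the de-Poissonization in full, none of which is as routine as the sketch suggests.
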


We shall derive relation \eqref{eq:knr} from Theorem 15 in \cite{Gnedin etal:2006}. The distributional convergence of $K_n$ as stated in Proposition \ref{prop:gnedin} was earlier obtained in Theorem 12 of \cite{Gnedin etal:2006}. The proof of this result given in \cite{Gnedin etal:2006} is different from our proof of Theorem \ref{thm:31a}.

The measure $\nu$ as in \eqref{eq:measure} has a continuous density on $(0,\infty)$ and satisfies $\nu([y,\infty])\sim \eee^{-y}$ as $y\to\infty$. According to \eqref{eq:Phi}, the corresponding $\Phi$ satisfies \eqref{eq:gneassum} with $c=\gamma$ and any $\varepsilon>0$. Further, for $r\in\mn$, put $F_r(t):=1-\eee^{-t}(1+t+t^2/2+\ldots+t^{r-1}/(r-1)!)$ for $t\geq 0$. Thus, $F_r$ is the distribution function of a gamma distributed random variable with parameters $r$ and $1$. It can be checked with the help of mathematical induction that $$(-1)^{r+1}t^r\Phi^{(r)}(t)=(r-1)!F_r(t),\quad t\geq 0.$$ In particular, the function $t\mapsto  (-1)^{r+1}t^r\Phi^{(r)}(t)$ is increasing on $[0,\infty)$. Thus, Proposition \ref{prop:gnedin} applies to the `balls-in-boxes' scheme with $\nu$ as in \eqref{eq:measure}.

\section{Proofs}\label{sect:proofs}

\begin{proof}[Proof of Proposition \ref{prop:eqdistr}]
For fixed $n\in\mn$ and each $t\geq 0$, put $$N_n(t):=\#\{1\leq k\leq n: E_k>S(t)\}.$$ Then $N_n:=(N_n(t))_{t\geq 0}$ is a decreasing integer-valued continuous-time Markov chain which starts at $n$ and gets absorbed at $0$. The variable $S^\leftarrow(E_{n,n})$ is the absorption time at $0$. According to Theorem 5.2(i) in \cite{Gnedin+Pitman:2005}, $N_n$ goes from $m$ to $m-k$ with probability
\begin{multline*}
\frac{{m \choose k}\int_{(0,\infty)}(1-\eee^{-x})^k \eee^{-x(m-k)}\nu({\rm d}x)}{\sum_{k=1}^m {m \choose k}\int_{(0,\infty)}(1-\eee^{-x})^k \eee^{-x(m-k)}\nu({\rm d}x)}=\frac{{m \choose k}\int_0^1 x^{k-1}(1-x)^{m-k}{\rm d}x}{\sum_{k=1}^m {m \choose k}\int_0^1 x^{k-1}(1-x)^{m-k}{\rm d}x}\\=\frac{1}{kh_m},\quad k=1,2,\ldots, m.
\end{multline*}
Thus, denoting by $J_n^\ast$ the number of exponential points (out of $n$) falling into the left-most occupied gap we infer $$\mmp\{J_n^\ast=k\}=\frac{1}{kh_n}=\mmp\{J_{n+1}=k\},\quad k=1,2,\ldots, n.$$ Let $\tau_n^\ast$ be the time of the first decrement of $N_n$. This random variable has the exponential distribution with mean $$\frac{1}{\sum_{k=1}^n {n\choose k}\int_{(0,\infty)}(1-\eee^{-x})^k \eee^{-x(n-k)}\nu({\rm d}x)}=\frac{1}{h_n}.$$ Fix any $j\in\mn$, $j\leq n$. Now, with $X_n^\ast:=(K_{n,1},\ldots, K_{n,j}, K_n, S^\leftarrow(E_{n,n}))$ and $Y_n^\ast:=(\1_{\{J_n^\ast=1\}},\ldots, \1_{\{J_n^\ast=j\}}, 1, \tau_n^\ast)$ for $n\geq 1$, we infer $$X_n^\ast \overset{{\rm d}}{=} X^\ast_{n-J^\ast_n}+Y^\ast_n,\quad n\geq 1, \quad X^\ast_0=\underbrace{(0,0, 0,\ldots, 0)}_{j+2~\text{zeros}},$$ where $J_n^\ast$ is independent of $Y_n^\ast$ and $(X^\ast_k)_{k\geq 1}$ on the right-hand side. Comparing this distributional equality with \eqref{eq:recur1} completes the proof of the proposition.
\end{proof}

\begin{proof}[Proof of Theorem \ref{thm:31a}]
Let $D([0,1])$ be the Skorohod space of c\`{a}dl\`{a}g functions defined on $[0,1]$ equipped with the $J_1$-topology. In what follows, $\Rightarrow$ denotes weak convergence on this space. It is a standard fact that $$\Big(\frac{S(ut)-{\tt m}_1ut}{({\tt m}_2t)^{1/2}}\Big)_{u\in [0,1]}~\Rightarrow~(-W(u))_{u\in [0,1]},\quad t\to\infty$$ (it is more convenient to write here $-W(u)$ rather than $W(u)$). By inversion (see, for instance, Lemma 1 in \cite{Vervaat:1972})
\begin{equation}\label{eq:conv}
\Big(\frac{S^\leftarrow(ut)-{\tt m}^{-1}_1ut}{({\tt m}_2{\tt m}^{-3}_1 t)^{1/2}}\Big)_{u\in [0,1]}~\Rightarrow~(W(u))_{u\in [0,1]},\quad t\to\infty.
\end{equation}
Specializing this to one-dimensional convergence and recalling that $E_{n,n}-\log n$ converges in distribution to a random variable with the Gumbel distribution we conclude that $$\frac{S^\leftarrow(E_{n,n})-{\tt m}^{-1}_1\log n}{({\tt m}_2{\tt m}^{-3}_1 \log n)^{1/2}}~\dod~W(1),\quad n\to\infty.$$ This demonstrates the distributional convergence of the second coordinate in Theorem \ref{thm:31a}. The distributional convergence of the first coordinate was proved in Theorem 3.1(a) of \cite{Gnedin+Iksanov:2012}. Thus, we only need to combine these into the joint convergence.

For each $t>0$, put $$W_t(u):=\frac{S^\leftarrow (t(1-u))-{\tt m}_1^{-1}t(1-u)}{({\tt m}_2{\tt m}^{-3}_1 t)^{1/2}},\quad u\in [0,1].$$ An inspection of the proof of Theorem 3.1(a) of \cite{Gnedin+Iksanov:2012} reveals that the distributional convergence of the first coordinate in Theorem \ref{thm:31a} is driven by that of $\int_{[0,\,1]}W_t(u){\rm d}_u(\Phi(\eee^{tu})/\Phi(\eee^t))$. Thus, we are left with showing that, for all real $\alpha_1$ and $\alpha_2$,
\begin{multline*}
\alpha_1 W_t(0)+\alpha_2\int_{[0,\,1]}W_t(u){\rm d}_u(\Phi(\eee^{tu})/\Phi(\eee^t))=\int_{[0,\,1]}(\alpha_1W_t(0)+\alpha_2 W_t(u)){\rm d}_u(\Phi(\eee^{tu})/\Phi(\eee^t))\\ ~\dod~\beta\int_0^1(\alpha_1W(1)+\alpha_2 W(1-u))u^{\beta-1}{\rm d}u=\alpha_1 W(1)+\alpha_2\beta\int_0^1 W(1-u)u^{\beta-1}{\rm d}u,\quad t\to\infty.
\end{multline*}
Since, for each $t>0$, $u\mapsto \Phi(\eee^{tu})/\Phi(\eee^t)$, $u\in[0,1]$ is the distribution function of a nonnegative random variable, $\lim_{t\to\infty}(\Phi(\eee^{tu})/\Phi(\eee^t))=u^\beta$ for each $u\geq 0$ and, in view of \eqref{eq:conv}, $(\alpha_1 W_t(0)+\alpha_2 W_t(u))_{u\in [0,1]}\Rightarrow (\alpha_1 W(1)+\alpha_2 W(1-u))_{u\in [0,1]}$ as $t\to\infty$, this convergence is secured by Lemma \ref{lem:conv} (a). The proof of Theorem \ref{thm:31a} is complete.
\end{proof}

\begin{proof}[Proof of Proposition \ref{prop:gnedin}]
Let $\pi:= (\pi(t))_{t\geq 0}$ denote a Poisson process on $[0,+\infty)$ of unit intensity. For $r\in\mn$ and $t\geq 0$, denote by $\mathcal{K}(t,r)$ the number of gaps that contain $r$ elements of the Poissonized sample $E_1,\ldots, E_{\pi(t)}$.

By Theorem 15 in \cite{Gnedin etal:2006}, under the assumptions of Proposition \ref{prop:gnedin}, excluding the monotonicity assumption, for any fixed $j\in\mn$, $$\Big(\frac{K_{n,r}-({\tt m}_1 r)^{-1}\log n}{(\log n)^{1/2}}\Big)_{1\leq r\leq j}~\dod~ (X_r)_{1\leq r\leq j},\quad n\to\infty,$$ where $(X_r)_{1\leq r\leq j}$ is a centered Gaussian vector with $\me [X_rX_s]={\tt m}_2{\tt m}_1^{-3}(rs)^{-1}+\1_{\{r=s\}}({\tt m}_1r)^{-1}$ for $r,s=1,2,\ldots, j$. Also,
\begin{equation}\label{eq:relation}
\Big(\frac{\mathcal{K}(t,r)-({\tt m}_1 r)^{-1}\log t}{(\log t)^{1/2}}\Big)_{1\leq r\leq j}~\dod~ (X_r)_{1\leq r\leq j},\quad t\to\infty,
\end{equation}
see p.~596 in \cite{Gnedin etal:2006}. The formula for the covariance entails that $$X_r=({\tt m}_2{\tt m}_1^{-3})^{1/2}r^{-1}Z+({\tt m}_1r)^{-1/2}Z_r,\quad r=1,2,\ldots, j,$$ where $Z$, $Z_1,\ldots, Z_j$ are independent random variables with the standard normal distribution. It remains to identify $Z$ with $W(1)$, where $W$ is a Brownian motion as in Theorem \ref{thm:31a}. To this end, we use a decomposition, for $r=1,\ldots, j$, \begin{multline*}
\mathcal{K}(\eee^t,r)-{\tt m}_1^{-1}\int_0^t h_r(y){\rm d}y=\Big(\mathcal{K}(\eee^t,r)-\int_{[0,\,t]}h_r(t-x){\rm d}S^\leftarrow(x)\Big)\\+\Big(\int_{[0,\,t]}h_r(t-x){\rm d}S^\leftarrow(x)-{\tt m}_1^{-1}\int_0^t h_r(y){\rm d}y\Big) =:A(t,r)+B(t,r),
\end{multline*}
where $h_r(t):=(-1)^{r+1}(r!)^{-1}\Phi^{(r)}(\eee^t)\eee^{rt}$ for $t\in\mr$. We shall show that
\begin{equation}\label{eq:purp}
\Big(\frac{B(t,r)}{t^{1/2}}\Big)_{1\leq r\leq j}~\dod~ ({\tt m}_2{\tt m}^{-3}_1)^{1/2}W(1)(r^{-1})_{1\leq r\leq j},\quad t\to\infty,
\end{equation}
where $(W(u))_{u\in [0,1]}$ is a standard Brownian motion as in \eqref{eq:conv}, and that 
\begin{equation}\label{eq:expect}
\int_0^t h_r(y){\rm d}y=r^{-1}\log t+O(1),\quad t\to\infty.
\end{equation}
In view of \eqref{eq:relation}, we then have $$\Big(\frac{A(t,r)}{t^{1/2}}\Big)_{1\leq r\leq j}~\dod~(({\tt m}_1r)^{-1/2}Z_r)_{1\leq  r\leq j},\quad t\to\infty.$$ Finally, these together with the argument given in the proof of Theorem \ref{thm:31a} will justify formula \eqref{eq:joint}.

\noindent {\sc Proof of \eqref{eq:purp}}. We intend to check that, for any $\alpha_1$, $\alpha_2,\ldots, \alpha_j\in\mr$, $$\frac{\sum_{r=1}^j\alpha_r\int_{[0,\,t]}h_r(t-x){\rm d}(S^\leftarrow(x)-{\tt m}_1^{-1}x)}{({\tt m}_2{\tt m}^{-3}_1 t)^{1/2}}~\dod~W(1)\sum_{r=1}^j\alpha_r r^{-1},\quad t\to\infty.$$ 
Integrating by parts we conclude that the left-hand side is equal to $$\sum_{r=1}^j\alpha_r\Big(h_r(0)\frac{S^\leftarrow(t)-{\tt m}_1^{-1}t}{({\tt m}_2{\tt m}^{-3}_1 t)^{1/2}}+\int_{(0,\,1]}\frac{S^\leftarrow(t(1-x))-{\tt m}_1^{-1}t(1-x)}{({\tt m}_2{\tt m}^{-3}_1 t)^{1/2}}{\rm d}_xh_r(tx)\Big).$$ For $r=1,\ldots, j$, the functions $h_r$ are nondecreasing by assumption and satisfy $\lim_{t\to\infty}h_r(t)=r^{-1}$ by Lemma \ref{lem:tech}. Also, relation \eqref{eq:conv} entails
\begin{equation*}
\Big(\frac{S^\leftarrow(t(1-x))-{\tt m}^{-1}_1t(1-x)}{({\tt m}_2{\tt m}^{-3}_1 t)^{1/2}}\Big)_{x\in [0,1]}~\Rightarrow~(W(1-x))_{x\in [0,1]},\quad t\to\infty
\end{equation*}
in the $J_1$-topology on $D([0,1])$. With these at hand, an application of Lemma \ref{lem:conv}(b) ensures that
\begin{multline*}
\sum_{r=1}^j\alpha_r\Big(h_r(0)\frac{S^\leftarrow(t)-{\tt m}_1^{-1}t}{({\tt m}_2{\tt m}^{-3}_1 t)^{1/2}}+\int_{(0,\,1]}\frac{S^\leftarrow(t(1-x))-{\tt m}_1^{-1}t(1-x)}{({\tt m}_2{\tt m}^{-3}_1 t)^{1/2}}{\rm d}_xh_r(tx)\Big)\\~\dod~W(1)\sum_{r=1}^j\alpha_r h_r(0)+ W(1)\sum_{r=1}^j\alpha_r(r^{-1}-h_r(0))=W(1)\sum_{r=1}^j\alpha_r r^{-1}.
\end{multline*}

\noindent {\sc Proof of \eqref{eq:expect}}. We use mathematical induction. If $r=1$, then $$\int_0^t h_1(y){\rm d}y=\Phi(\eee^t)-\Phi(1)=\log t+O(1),\quad t\to\infty.$$ Assume that \eqref{eq:expect} holds with $r=s$. Then using this together with $h_s(t)=O(1)$ as $t\to\infty$, we infer $$\int_0^t h_{s+1}(y){\rm d}y=h_s(t)-h_s(0)+s(s+1)^{-1}\int_0^t h_s(y){\rm d}y=(s+1)^{-1}\log t+O(1),\quad t\to\infty.$$

The proof of Proposition \ref{prop:gnedin} is complete. 
\end{proof}

\section{Appendix}

Here, we collect a couple of auxiliary results. Recall that $$\Phi(t)=\int_{(0,\,\infty)}(1-\exp(-t(1-\eee^{-x})))\nu({\rm d}x),\quad t>0.$$
\begin{lemma}\label{lem:tech}
Let $c>0$. The relation
\begin{equation}\label{eq:haan}
\lim_{t\to\infty}(\Phi(\lambda t)-\Phi(t))=c\log \lambda
\end{equation}
for all $\lambda>0$ implies that, 
for each $r\in\mn$,
\begin{equation}\label{eq:deriv}
\lim_{t\to\infty}(-1)^{r+1}t^r \Phi^{(r)}(t)=c(r-1)!.
\end{equation}
Conversely, if relation \eqref{eq:deriv} holds for some $r\in\mn$, then \eqref{eq:haan} holds.
\end{lemma}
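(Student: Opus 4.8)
\textbf{Proof plan for Lemma \ref{lem:tech}.}

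The plan is to exploit the special structure of $\Phi$ as a Laplace-type transform of $\nu$ and to relate the behaviour of $\Phi$ and its derivatives to a Tauberian/Abelian dictionary for the measure $\nu$. First I would observe that, differentiating under the integral sign, one gets for each $r\in\mn$
\begin{equation*}
(-1)^{r+1}\Phi^{(r)}(t)=\int_{(0,\infty)}(1-\eee^{-x})^r\eee^{-t(1-\eee^{-x})}\nu({\rm d}x)=\int_0^1 s^{r-1}\eee^{-ts}\mu({\rm d}s),
\end{equation*}
where $\mu$ is the image of $\nu$ under $x\mapsto 1-\eee^{-x}$, a measure on $(0,1)$. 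Thus $(-1)^{r+1}t^r\Phi^{(r)}(t)=\int_0^1 (ts)^{r-1}\eee^{-ts}\,(t\,{\rm d}\mu(s))$, and the quantity $\int_0^y s^{-1}\mu({\rm d}s)$ plays the role of an integrated tail; indeed, writing $M(y):=\int_{(0,y)}s^{-1}\mu({\rm d}s)$ for $y\in(0,1]$, a change of variables shows that $(-1)^{r+1}t^r\Phi^{(r)}(t)$ is, up to the gamma-kernel weighting, an average of $M$ against a probability density concentrating near $s\approx r/t$. The key analytic fact I would isolate is that the single Abelian input we are given, relation \eqref{eq:haan}, is equivalent (via the case $r=1$, where $-t\Phi'(t)=\int_0^1 s\eee^{-ts}\mu({\rm d}s)=\int_0^1\eee^{-ts}s\mu({\rm d}s)$, together with the monotone density/finite-moment hypotheses that make $\Phi(t)=\int_0^t y^{-1}\,\text{(something)}\,{\rm d}y$-type representations legitimate) to the statement $M(1/t)\sim c\log t$, i.e. $\mu$ has a logarithmically-divergent density-weighted mass near $0$ of the precise form $\int_{(0,y)}s^{-1}\mu({\rm d}s)=c\log(1/y)+o(\log(1/y))$ as $y\downarrow 0$.

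Granting that equivalence, the forward direction \eqref{eq:haan}$\Rightarrow$\eqref{eq:deriv} becomes a de Haan–type Tauberian statement: since $y\mapsto\int_{(0,y)}s^{-1}\mu({\rm d}s)$ is slowly varying at $0$ with de Haan index governed by $c$, the family of averages $\int_0^1 (ts)^{r-1}\eee^{-ts}\,t\,\mu({\rm d}s)$ converges, as $t\to\infty$, to $c\int_0^\infty u^{r-2}\eee^{-u}\,{\rm d}u\cdot\text{(correction)}$; more precisely I would integrate by parts to express $(-1)^{r+1}t^r\Phi^{(r)}(t)$ in terms of $M$ and the kernel $k_r(u)=u^{r-1}\eee^{-u}$ normalized by its integral $\Gamma(r)$, observe $\int_0^\infty u^{-1}\big(k_r(u)/\Gamma(r)\big)\,{\rm d}u$-style identities, and let the slow variation of $M$ together with dominated convergence collapse the limit to $c\cdot\Gamma(r)=c(r-1)!$. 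A cleaner route, which I would actually prefer to write up, avoids $\mu$ entirely: by induction on $r$, using the algebraic identity $t^{r+1}\Phi^{(r+1)}(t)=t\frac{{\rm d}}{{\rm d}t}\big(t^r\Phi^{(r)}(t)\big)-r\,t^r\Phi^{(r)}(t)$, i.e. $(-1)^{r+2}t^{r+1}\Phi^{(r+1)}(t)=r\,(-1)^{r+1}t^r\Phi^{(r)}(t)-t\frac{{\rm d}}{{\rm d}t}\big((-1)^{r+1}t^r\Phi^{(r)}(t)\big)$, one would need the derivative term to be $o(1)$; this is where monotonicity-type regularity of $t^r\Phi^{(r)}(t)$ (not assumed in this lemma, but following from the representation as $\int s^{r-1}\eee^{-ts}\mu({\rm d}s)$, a completely monotone-type object, hence $t^r\Phi^{(r)}(t)$ is itself smooth with controllable derivative) lets one conclude. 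Base case $r=1$ is exactly \eqref{eq:haan} rewritten: $-t\Phi'(t)=\lim_{\lambda\to 1}(\Phi(\lambda t)-\Phi(t))/(\lambda-1)$ heuristically, made rigorous because $\Phi'$ is monotone, giving $-t\Phi'(t)\to c$.

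For the converse \eqref{eq:deriv}$\Rightarrow$\eqref{eq:haan}, I would fix $r$ with $\lim_{t\to\infty}(-1)^{r+1}t^r\Phi^{(r)}(t)=c(r-1)!$ and integrate $r$ times. Writing $g(t):=(-1)^{r+1}t^r\Phi^{(r)}(t)$, so $g(t)\to c(r-1)!$, one recovers $\Phi$ by successive integration of $(-1)^{r+1}\Phi^{(r)}(t)=g(t)/t^r$; each integration of an expression asymptotic to $c(r-1)!/t^r$ picks up one negative power of $t$ and, at the last step, produces a logarithm: $\int^t g(s)/s\,{\rm d}s\sim c(r-1)!\log t$ after $r-1$ antiderivatives have turned $1/t^r$ into (a constant times) $1/t$ plus integrable remainder. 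Keeping track of the constants through the $r-1$ intermediate integrations (each contributing a factorial factor that telescopes against $(r-1)!$) yields $\Phi(t)=c\log t+O(1)$, whence \eqref{eq:haan}. I expect the main obstacle to be the forward implication, specifically justifying the passage from convergence of the "zeroth-order" quantity in \eqref{eq:haan} to convergence of the genuine $r$th derivative: this requires either a Tauberian condition supplied by the complete monotonicity of $(-1)^{r+1}\Phi^{(r)}$ (so that no oscillation can hide in the derivatives) or an explicit smoothing argument bounding $t\frac{{\rm d}}{{\rm d}t}\big(t^r\Phi^{(r)}(t)\big)$ via the integral representation; I would lead with the complete-monotonicity observation, since $(-1)^{r+1}\Phi^{(r)}(t)=\int_{(0,\infty)}(1-\eee^{-x})^r\eee^{-t(1-\eee^{-x})}\nu({\rm d}x)$ exhibits each $(-1)^{r+1}\Phi^{(r)}$ manifestly as a completely monotone function, and monotone functions with a known integral asymptotic have a forced pointwise asymptotic for their (monotone) derivatives.
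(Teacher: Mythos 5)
Your proposal follows essentially the same route as the paper: the base case $r=1$ is handled by de Haan's theorem (BGT, Thm.~3.6.8) using the monotonicity of $\Phi'$, the induction in $r$ for the forward direction rests on the complete monotonicity of $\Phi'$ (which you exhibit via the integral representation, just as the paper invokes Bernstein's criterion) together with the monotone density theorem, and the converse is an iterated Karamata integration. A couple of small slips—your image-measure formula should read $(-1)^{r+1}\Phi^{(r)}(t)=\int_0^1 s^{r}\eee^{-ts}\mu({\rm d}s)$ rather than $s^{r-1}$, and the base case is $t\Phi'(t)\to c$ (not $-t\Phi'(t)$, since $(-1)^{1+1}=1$)—and your preferred ``algebraic identity'' inductive step is slightly circular as stated (showing the derivative term is $o(1)$ is equivalent to the conclusion), but you correctly identify the monotone-density/Tauberian input that actually closes the argument.
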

\begin{proof}
Since $\Phi^\prime$ is a nondecreasing function, relation \eqref{eq:haan} is equivalent to $\lim_{t\to\infty}t \Phi^\prime(t)=c$ by Theorem 3.6.8 in \cite{BGT:1989}. For the converse part of the lemma, assume that \eqref{eq:deriv} holds for some $r\geq 2$. Applying Karamata's theorem (Proposition 1.5.10 in \cite{BGT:1989}) $r-1$ times we obtain \eqref{eq:deriv} with $r=1$.

For the direct part, assume that \eqref{eq:deriv} holds with $r=1$. Now we shall prove by mathematical induction that it holds for all $r\in\mn$. Indeed, let \eqref{eq:deriv} hold with $r=k$. By Bernstein's criterion, the function $\Phi^\prime$ is completely monotone. In particular, $(-1)^{k+2}\Phi^{(k+1)}$ is a nonincreasing function. Now \eqref{eq:deriv} with $r=k+1$ follows from a version of the monotone density theorem (see the remark following Theorem 1.7.2 in \cite{BGT:1989}).
\end{proof}

The second result is essentially Lemma 6.4.2 in \cite{Iksanov:2016}. 
\begin{lemma}\label{lem:conv}
Let $a,b>0$. Assume that $X_t\Rightarrow X$ as $t\to\infty$ on $D([0,a])$ in the $J_1$-topology.

\noindent (a) If $(\nu_t)_{t\geq 0}$ is a family of finite measures such that $\nu_t$ converge weakly to $\nu$ as $t\to\infty$, where $\nu$ is a finite measure on $[0,a]$ which is continuous with respect to Lebesgue measure, then $$\int_{[0,\,a]}X_t(y)\nu_t({\rm d}y)~\dod~\int_{[0,\,a]}X(y)\nu({\rm d}y),\quad t\to\infty.$$ 

\noindent (b) If $f:[0,\infty)\to [0,\infty)$ is a nondecreasing function satisfying $\lim_{t\to\infty}f(t)=b$, then $$\int_{(0,\,a]}X_t(y){\rm d}_yf(ty)~\dod~ X(0+)(b-f(0+)),\quad t\to\infty.$$
\end{lemma}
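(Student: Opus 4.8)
\textbf{Proof proposal for Lemma \ref{lem:conv}.}

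The plan is to reduce both parts to the continuous mapping theorem together with a Skorokhod-representation argument, exactly as in the proof of Lemma 6.4.2 in \cite{Iksanov:2016}. For part (a), I would first pass to a probability space on which $X_t\to X$ almost surely in the $J_1$-topology (Skorokhod's representation theorem applies since $D([0,a])$ is separable and the limit lives there). Fix such an $\omega$ for which both $X_t(\cdot)\to X(\cdot)$ in $J_1$ and $X$ has at most countably many discontinuities, which is automatic for a $J_1$-limit. The key point is that $J_1$-convergence implies convergence at every continuity point of the limit, and since $\nu$ is absolutely continuous the (at most countable) discontinuity set of $X$ is $\nu$-null; hence $X_t\to X$ pointwise $\nu$-a.e. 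Combined with the weak convergence $\nu_t\Rightarrow\nu$ on the compact set $[0,a]$ and a uniform bound on $\sup_t\|X_t\|_\infty$ — which follows from $J_1$-convergence on a compact interval, as the sup-norm is $J_1$-continuous at continuous limits, but in general one argues via tightness: $J_1$-convergent sequences are uniformly bounded in sup-norm along almost every sample path — one invokes a standard "converging together" lemma for integrals ($\int f_t\,d\mu_t\to\int f\,d\mu$ when $f_t\to f$ $\mu$-a.e., $\sup\|f_t\|_\infty<\infty$, $\mu_t\Rightarrow\mu$) to conclude $\int_{[0,a]}X_t\,d\nu_t\to\int_{[0,a]}X\,d\nu$ almost surely, hence in distribution on the original space.

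For part (b), the integrator $y\mapsto f(ty)$ on $(0,a]$ is nondecreasing with total variation $f(ta)-f(0+)\to b-f(0+)$, and the rescaled measures $\mathrm{d}_yf(ty)$ converge weakly to $(b-f(0+))\delta_{0}$ on $[0,a]$: indeed, for any $\eta>0$, the mass on $[\eta,a]$ is $f(ta)-f(t\eta)\to b-b=0$ since $f(t\eta)\to b$ and $f(ta)\to b$, while the total mass converges to $b-f(0+)$. Thus the mass concentrates at $0$. I would then want to write $\int_{(0,a]}X_t(y)\,\mathrm{d}_yf(ty)\to X(0+)(b-f(0+))$; the subtlety compared with (a) is that the limit measure is a point mass \emph{not} absolutely continuous, sitting exactly at a point $0$ which may be a discontinuity of $X$. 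This is handled by exploiting the structure of $J_1$-convergence near $0$: after Skorokhod representation, $J_1$-convergence on $[0,a]$ forces $X_t(0)\to X(0)=X(0+)$ (the left endpoint is always a "continuity-type" matching point for $J_1$ parametrizations), and more importantly, for every $\varepsilon>0$ there is $\eta>0$ with $\limsup_t\sup_{y\in[0,\eta]}|X_t(y)-X(0+)|<\varepsilon$ by the right-continuity of $X$ at $0$ together with the $J_1$ time-change estimate. Splitting the integral at $\eta$ and using that the mass on $[\eta,a]$ vanishes while the mass on $[0,\eta]$ is close to $b-f(0+)$ with integrand close to $X(0+)$, one obtains the claim.

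The main obstacle — and the only place where one must be genuinely careful rather than routine — is the interchange near the atom at $0$ in part (b): one cannot blindly apply a continuous-mapping argument because the functional $g\mapsto \int_{(0,a]} g\,\mathrm{d}_y f(ty)$ is \emph{not} continuous on all of $D([0,a])$ in the $J_1$-topology when the limiting integrator has an atom at a potential jump location of the argument. The resolution, as indicated above, is that the atom sits at the left endpoint $0$, where $J_1$-convergence does behave well (any $J_1$-limit is right-continuous, and time-changes fix $0$), so the discontinuity of the functional is avoided on the relevant subset of $D([0,a])$ that carries the limit law; quantitatively this is the $\eta$-splitting estimate. Everything else — the Skorokhod coupling, the weak convergence of the integrators, and the dominated-type passage to the limit in part (a) — is standard. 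Since this is verbatim the content of Lemma 6.4.2 in \cite{Iksanov:2016}, I would simply cite that reference and sketch the two points above, rather than reproduce the full argument.
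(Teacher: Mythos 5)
The paper does not actually prove this lemma; it merely states it and cites Lemma 6.4.2 of \cite{Iksanov:2016} in one sentence, which is also what you propose to do in the end. Your sketch of part (b) is essentially sound: the mass of ${\rm d}_y f(ty)$ concentrates near $0$ with total mass $f(ta)-f(0+)\to b-f(0+)$, and the $J_1$ estimate near the left endpoint (which is a fixed point of every admissible time change, and where the limit is right-continuous) is precisely what makes the $\eta$-splitting argument work despite the limiting integrator being a point mass.

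Your sketch of part (a), however, hinges on a ``standard converging together lemma'' that is false as you state it. The hypotheses ``$f_t\to f$ $\mu$-a.e., $\sup_t\|f_t\|_\infty<\infty$, $\mu_t\Rightarrow\mu$'' do \emph{not} imply $\int f_t\,{\rm d}\mu_t\to\int f\,{\rm d}\mu$. For a counterexample on $[0,1]$, take $\mu$ to be Lebesgue measure, $\mu_n:=n^{-1}\sum_{k=1}^n\delta_{k/n}$, and $f_n:=\1_{A_n}$ with $A_n:=\bigcup_{k=1}^n[k/n,\,k/n+n^{-3})$. Each $f_n$ is cadlag with $\|f_n\|_\infty=1$, $\mu_n\Rightarrow\mu$, and since $\sum_n{\rm Leb}(A_n)<\infty$, Borel--Cantelli gives $f_n\to 0$ Lebesgue-a.e.; yet $\int f_n\,{\rm d}\mu_n=1$ for every $n$. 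The point is that $\nu$-a.e.\ pointwise convergence of $X_t$ says nothing about $\nu_t$, which may put all its mass on the exceptional set. What rescues your argument is exactly the $J_1$ structure that your lemma statement discards: after the Skorokhod coupling there are time changes $\lambda_t$ with $\|\lambda_t-\mathrm{id}\|_\infty\to 0$ and $\|X_t\circ\lambda_t-X\|_\infty\to 0$; substituting $y=\lambda_t(s)$ gives $\int_{[0,a]}X_t\,{\rm d}\nu_t=\int_{[0,a]}(X_t\circ\lambda_t)\,{\rm d}(\nu_t\circ\lambda_t)$, the error $\|X_t\circ\lambda_t-X\|_\infty\,\nu_t([0,a])$ vanishes, $\nu_t\circ\lambda_t\Rightarrow\nu$, and then $\int X\,{\rm d}(\nu_t\circ\lambda_t)\to\int X\,{\rm d}\nu$ because $X$ is bounded and $\nu$-a.e.\ continuous. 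You should replace the false ``converging together'' step by this time-change computation (or by an $\eta$-splitting estimate analogous to the one you already use in part (b)).
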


\vskip0.5cm \noindent {\bf Acknowledgement}. The work of A. Iksanov was supported by the National Research Foundation of Ukraine (project 2023.03/0059 ‘Contribution to modern theory of random series’).

\end{document}